\theoremstyle{plain}
\newtheorem{theo}{Theorem}
\newtheorem{Hyp}{Hypothesis}
\newtheorem{lemme}[theo]{Lemma}
\newtheorem{prop}[theo]{Proposition}
\newtheorem*{remi}{Remark}
\def\a{{\alpha}}
\def\1{{\mathbf{1}}}
\def\p{{\mathbf P}}
\def\P{{\mathbf P}}
\def\e{{\mathbf E}}
\def\eps{{\varepsilon}}
\begin{document}
\date{\hspace*{0mm}}
\title{\textbf{Deviation probability bounds for fractional martingales and related remarks}}
\author{Bruno \textsc{Saussereau}
\footnote{Laboratoire de Math\'{e}matiques de Besan\c{c}on, CNRS, UMR 6623;
16 Route de Gray,
25030 Besan\c{c}on cedex, France.
\texttt{bruno.saussereau@univ-fcomte.fr}}
}
\maketitle

\begin{center}
\begin{minipage}{0.9\textwidth}
\begin{center}\textbf{Abstract}\end{center}
In this paper we prove exponential inequalities (also called Bernstein's inequality) for fractional martingales. As an immediate corollary, we will discuss weak law of large numbers for fractional martingales under divergence assumption on the $\beta-$variation of the fractional martingale. A non trivial example of application of this convergence result is proposed.
 \\
%
%\vspace{0.3cm}
\hrule
\vspace{0.1cm}
\footnotesize{
\textbf{Keywords: }Fractional martingales, exponential inequality, law of large numbers \\ %
\textbf{AMS 2000 Subject classifications: 60G22, 60G48, 60B12}
}
\end{minipage}
\end{center}
%\runtitle{SCL with fractional stochastic forcing}
%
\section{Introduction}
The notion of fractional martingales has been introduced in \cite{hns} where the author proved an extension of L\'evy's characterization theorem to the fractional Brownian motion. The purpose of this short communication is to investigate exponential inequalities of Bernstein's type and their applications to laws of large numbers for fractional martingales.

More precisely when we fix $\alpha\in (-\frac{1}2{,}\frac{1}2)$, if $M=(M_t)_{t\ge 0}$ is a continuous local martingale, the process $M^{(\a)}=(M_t^{(\alpha)})_{t\ge 0}$ defined by
$ M_t^{(\alpha)} = \int_0^t (t-s)^\a dM_s$ will be called a fractional martingale (provided that the above integral exists).
For a fixed time $t$, we can consider the true martingale  $(Z^t_u)_{0\le u\le t}$ defined by $Z^t_u = \int_{0}^u (t-s)^\alpha \xi_s dW_s$. Here $t$ is consider as a fixed parameter for the martingale $Z^t$.
As a consequence of the classical exponential inequality, one can easily obtain some deviation probability bounds. For example if $\alpha<0$, it is clear that
\begin{align*}
    \mathbf P \left ( | M^{(\alpha)}_t |\ge u\ ,\ \mbox{$\int_0^t |\xi_\tau|^{2}d\tau \le \nu_t $ }\right ) & \le 2 \exp \left ( -  \frac{u^2}{4 \ t^{2\a} \ \nu_t}\right ).
\end{align*}
Hence it is easy to prove some exponential inequalities when $t$ is fixed (see also Remark \ref{rem+}).

Our aim is to investigate some deviation bounds for $\sup_{0\le s\le t} |M_s^{(\alpha)}|$. This will be no more a straightforward application of the result in the martingale case when $\alpha=0$. We recall that for the martingale $M^{(0)}$, if it vanishes at time $t=0$, then
\begin{align}\label{revuz}
 \p \left ( \sup_{s\le t} |M_s| \ge at  \right ) \le \ 2\exp \left ( -\frac{a^2t}{2c}\right ) ,
\end{align}
if $c$ is a constant such that $\langle M \rangle_t \le ct $ for all $t$ (see \cite{ry} Exercice 3.16, Chapter 4).

And so our work will concern the extension of exponential inequalities similar to \eqref{revuz} for fractional martingales.
Since on any time interval, the process $M^{(\a)}$ has finite nonzero variation of order $\beta=2/(1+2\a)$
we shall try to use some quantities related to the $\beta-$variation in the statement of our Bernstein's type inequality. This will represent the main result of this paper and it is the purpose of Theorem \ref{bernstein}.

As an application of these exponential inequalities, we will have a discussion around weak law of large numbers for fractional martingales. We think that using our result on the deviation probability bounds for fractional martingale is a first step to the study of law of large numbers for fractional martingales. So in Proposition \ref{wlln} we establish  that $\sup_{0\le s\le t} |M^{(\a)}_s|/\langle M^{(\a)} \rangle_{\beta,t} $ tends to $0$ in probability provided that the $\beta-$variation $\langle M^{(\a)} \rangle_{\beta,t}$ tends to infinity faster than $t^a$ for one $a>0$. Of course this is not a classical condition but we present a non trivial application of this weak law of large numbers in Proposition \ref{apply}. To end this discussion, a related convergence result is given in Proposition \ref{conv00} under the more conventional assumption on the divergence of the quadratic variation of the underlying martingale. More precisely, we shall prove that $\int_0^t (t-s)^\alpha\xi_sdW_s/\int_0^t (t-s)^{2\alpha}\xi_s^2ds$ tends to 0 almost surely when $\alpha>0$ and under the assumption that $\int_0^{+\infty} \xi_s^2 ds =+\infty$ almost-surely. In a sense, the law of large numbers for the martingale $(\int_0^t\xi_sdW_s)_{t\ge 0}$ is transferred to its fractional martingale $M^{(\alpha)}$.

The paper is organized as follows. In the following section we precise our notations and we state our results. Two proofs will be given in Section \ref{proofs} and Section \ref{proof_conv00}.
%ooooooooooooooooooooooooooooooooooooooooooooooooooooooooooooooooooooooooooooooooooooooooooooooooooooooo
%ooooooooooooooooooooooooooooooooooooooooooooooooooooooooooooooooooooooooooooooooooooooooooooooooooooooo
%ooooooooooooooooooooooooooooooooooooooooooooooooooooooooooooooooooooooooooooooooooooooooooooooooooooooo
%ooooooooooooooooooooooooooooooooooooooooooooooooooooooooooooooooooooooooooooooooooooooooooooooooooooooo
\section{Notations and main results}\label{nota}
We follow the terminology of \cite{hns}. Let $(\Omega{,}\mathcal F{,}\p)$ be a complete probability space equipped with a continuous filtration $(\mathcal F_t)_{t\ge 0}$ such that $\mathcal F_0$ contains the $\P-$negli\-geable events.
Let $\beta\ge 1$ and let $X=(X_t)_{t\ge 1}$ be a continuous adapted process. The $\beta-$variation of $X$ on the time interval $[0{,}t]$ is denoted by $\langle X\rangle_{\beta,t}$ and is defined as the limit in probability (if it does exist) of
$$ S^{[0{,}t]}_{\beta,n} := \sum_{i=1}^n | X_{t_i^n}-X_{t_{i-1}^n}|^\beta$$
where for $i=0,...,n$, $t_i^n =\frac{i}n \times t$. If the convergence holds in $\mathbf{L}^1$, we say that the $\beta-$variation exists in $\mathbf{L}^1$. A parameter $\a \in (-\frac{1}2 {,} \frac{1}2 ) $ is fixed and we denote
\begin{align*}
&\beta = \frac{2}{1+2\a} .
\end{align*}
We notice that $\beta\in (1{,}+\infty)$ and $\beta>2$ when $\a<0$. Let $M^{(\a)}=(M^{(\a)}_t)_{t\ge 0}$ a fractional martingale of order $\a$. This means that $M^{(\a)}$ is a continuous $\mathcal F_t-$ada\-pted process such that there exists a continuous local martingale $M=(M_t)_{t\ge 0}$ with $ \int_0^t (t-s)^{2\a}d \langle M\rangle_s <\infty  $ a.s. for all $t\ge 0$,  and
\begin{align}\label{m-alpha}
 M^{(\a)}_t & = \int_0^t (t-s)^\a dM_s\ .
\end{align}
If $\a\in (0{,}\frac{1}2)$, the above integral always exists as a Riemann-Stieltjes integral. In order to ensure the existence $M^{(\a)}$ when $\a\in (-\frac{1}2{,}0)$, we assume the following  hypothesis in all the sequel.
\begin{Hyp}\label{hyp-M}
The continuous local martingale $M$ is of the form
$$ M_t = \int_0^t \xi_s \,dW_s$$ where $W=(W_t)_{t\ge 0}$ is a $\mathcal F_t-$Brownian motion and $\xi=(\xi_t)_{t\ge 0}$ is a progressively measurable process such that for all $t\ge 0$
$$
\left\{
  \begin{array}{ll}
    \displaystyle{ \int_0^t \e\big (|\xi_s|^{\beta'} )}ds <\infty\quad \hbox{for some $\beta'>\beta$,} & \hbox{if $\a<0$;} \\
            & \\
%    \displaystyle{ \int_0^t \big ( \e(\xi_s^2 )\big )^{\beta/2} }ds <\infty \ ,  & \hbox{if $\a>0$.} \\
%            & \\
    \displaystyle{  \int_0^t  \e\big (|\xi_s^2 |\big )ds} <\infty\ ,  & \hbox{if $\a>0$.} \\
  \end{array}
\right.
$$
\end{Hyp}
Under Hypothesis \ref{hyp-M}, the integral appearing in \eqref{m-alpha} always exists as a Riemann-Stieltjes integral. This is a consequence of \cite[Lemma 2.2]{hns} and the fact that the trajectories
of $M$ are $\alpha'-$H\"{o}lder continuous on finite interval. Moreover, by Theorem 2.6 and Remark 2.7 of \cite{hns}, the $\beta-$variation of $M^{(\a)}$ exists in $\mathbf{L}^1$ and
$$ \langle M^{(\a)} \rangle_{\beta,t} = c_\a \ \int_0^t |\xi_s|^\beta ds $$
where $c_\a$ depends only on $\a$. The explicit form of the constant $c_\alpha$ is given in \cite{hns} but this is not important in our work.

Nevertheless we stress the point that under Hypothesis \ref{hyp-M}, the expression of $M^{(\a)}$ is given by
\begin{align}\label{m-alpha-bis}
 M^{(\a)}_t & = \int_0^t (t-s)^\a \,\xi_s \, dW_s\ .
\end{align}
Moreover, using Hölder's inequality  one deduces the following relations between the $\beta-$va\-ria\-tion of $M^{(\a)}$ and the quadratic variation of the underlying martingale $M$:
\begin{align*}
 &    \left\{
      \begin{array}{ll}
        \langle M\rangle_t  & \le c_\a^{-2/\beta}\ t^{\frac{\beta-2}{\beta}}\  \langle M^{(\a)} \rangle_{\beta,t}^{2/\beta}\hfill{\qquad\hbox{when $\a<0$;}} \\
         &  \\
         \langle M^{(\a)} \rangle_{\beta,t}& \le c_\a\ t^{\frac{2-\beta}2} \ \langle M\rangle_t^{\beta/2} \hfill{\hbox{when $\a>0$.}}
      \end{array}
    \right.
\end{align*}
Our main result which is a generalization of Bernstein's inequality to fractional martingales is stated in the next theorem.
\begin{theo}\label{bernstein}
We assume Hypothesis \ref{hyp-M}. We denote ${C_t}=2 + 2^{1/2} t^2 $. For any positive function $t\mapsto \nu_t$ and any $L\ge 1$ the following exponential inequalities hold.
\begin{itemize}
\item[(i)] When $\alpha<0$  we have
\begin{align}\label{inegexp}
&    \mathbf P \left ( \sup_{0\le s\le t}| M^{(\alpha)}_s |\ge L\,{c_1}\,t^{\frac{\beta'-\beta}{2\beta\beta'}} \,\nu_t^{1/2}  \ ,\ \mbox{$\big (\int_0^t |\xi_\tau|^{\beta'}d\tau \big )^{2/\beta'}\le \nu_t $ }\right )\le C_t \exp \left \{ - \frac{{\kappa}^2\,L^2}{t^{\frac{\beta'-\beta}{\beta\beta'}}}   \right \}\
\end{align}
with $c_1$ defined in \eqref{c1} and $\kappa^2 = 4\pi (\beta\beta'/(\beta'-\beta))^3$.
\item[(ii)] When $\a>0$, for any $\eps\in (0{,}\alpha)$ it holds  that
\begin{align}\label{inegexp2}
    \mathbf P \left ( \sup_{0\le s\le t}| M^{(\alpha)}_s |\ge \,L\, 2^6\,\kappa\,t^{\alpha-\eps} \,\nu_t^{1/2} \ ,\ \mbox{$\int_0^t |\xi_\tau|^{2}d\tau \le \nu_t $ }\right ) & \le C_t \exp \left \{ - \frac{\kappa^2\,L^2}{t^{2(\alpha-\eps)}}   \right \}
\end{align}
with $\kappa = (\pi/2)^{1/2} \eps^{-3/2}$.
\item[(iii)]
If we assume that the process $\xi$ is bounded by $c_\infty$ almost-surely, then for any $\alpha\in (-\frac{1}2{,}\frac{1}2)$ and any $\eps\in (0{,}\frac{1}2 +\alpha)$
\begin{align}\label{inegexp3}
    \mathbf P \left (\sup_{0\le s\le t} | M^{(\alpha)}_s |\ge L\, 2^6\,\kappa\,c_\infty\,t^{1/2+\alpha-\eps} \right ) & \le  C_t \exp \left \{ - \frac{\kappa^2\,L^2}{t^{1+2\alpha-2\eps}}   \right \}\ ,
\end{align}
with $\kappa = (\pi/2)^{1/2} \eps^{-3/2}$.
\end{itemize}
\end{theo}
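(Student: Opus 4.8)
The plan is to deduce all three bounds from a single Garsia--Rodemich--Rumsey (GRR) estimate on the modulus of continuity of the path $s\mapsto M^{(\a)}_s$, fed by a Bernstein bound on its increments. Since $M^{(\a)}_0=0$, one has $\sup_{0\le s\le t}|M^{(\a)}_s|\le \sup_{0\le u,v\le t}|M^{(\a)}_v-M^{(\a)}_u|$, so it is enough to control the oscillation. For fixed $u<v$ the increment is the terminal value of a true martingale,
\[
M^{(\a)}_v-M^{(\a)}_u=\int_0^v g^{u,v}_r\,\xi_r\,dW_r,\qquad g^{u,v}_r=(v-r)^\a\1_{[u,v]}(r)+\big((v-r)^\a-(u-r)^\a\big)\1_{[0,u]}(r),
\]
so that, exactly as in \eqref{revuz}, on the event $\{A(u,v)\le a\}$ with $A(u,v)=\int_0^v(g^{u,v}_r)^2\xi_r^2\,dr$ the increment is sub-Gaussian: $\p(|M^{(\a)}_v-M^{(\a)}_u|\ge x,\,A(u,v)\le a)\le 2\exp(-x^2/(2a))$.

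The first real task is to dominate the variance proxy $A(u,v)$ by a deterministic power of $|v-u|$ on the prescribed event. When $\a<0$ I would split $A(u,v)$ by H\"older with conjugate exponents tuned to $\beta'$, writing $A(u,v)\le\big(\int_0^v|g^{u,v}_r|^{2q}dr\big)^{1/q}\big(\int_0^v|\xi_r|^{\beta'}dr\big)^{2/\beta'}$ with $2q'=\beta'$; a direct computation of the kernel norm (the near-diagonal part $\int_u^v(v-r)^{2q\a}dr$ and the smoother off-diagonal part) gives $A(u,v)\le C\,|v-u|^{2\theta}\nu_t$ with $\theta=2\a+1/q=(\beta'-\beta)/(\beta\beta')$. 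When $\a>0$ the kernel is no longer singular, so I keep the plain quadratic variation $\int_0^t(t-s)^{2\a}\xi_s^2\,ds$, at the cost of an arbitrarily small $\eps$ needed to turn the near-diagonal behaviour into the clean exponent $\alpha-\eps$; the bounded-$\xi$ case (iii) is the same computation with $\nu_t$ replaced by $c_\infty^2 t$.

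With the increments shown to be sub-Gaussian at scale $|v-u|^{\theta}\nu_t^{1/2}$, I would apply GRR on $[0,t]$ with $\Psi(x)=e^{x^2}-1$ and $p(\rho)\propto\rho^{\theta}$, normalised so that $\e\big[\Psi(|M^{(\a)}_v-M^{(\a)}_u|/p(|v-u|))\big]\le\sqrt2$, using the Gaussian moment $\e[e^{N^2/4}]=\sqrt2$. Taking expectations of the GRR double integral $B=\iint\Psi(\cdots)\,du\,dv$ gives $\e[B]\le\sqrt2\,t^2$, which (together with a bounded additive contribution) accounts for the constant $C_t=2+2^{1/2}t^2$. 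Since the GRR conclusion $\sup|M^{(\a)}_v-M^{(\a)}_u|\le 8\int_0^t\Psi^{-1}(4B/\rho^2)\,dp(\rho)$ is increasing in $B$ and $\Psi^{-1}(y)=\sqrt{\log(1+y)}$, the event $\{\sup\ge\lambda\}$ forces $B$ to be exponentially large, and Markov's inequality converts $\e[B]\le\sqrt2\,t^2$ into the factor $C_t\exp(-\,\cdot\,)$. The explicit constant $\kappa$ is nothing but the leading coefficient of the GRR modulus integral: $\int_0^{\infty}\sqrt{2y}\,e^{-\theta y}\,dy=\sqrt{\pi/2}\,\theta^{-3/2}$, so its square yields $\kappa^2\propto\pi\,\theta^{-3}$, matching $4\pi(\beta\beta'/(\beta'-\beta))^3$ in (i) and $(\pi/2)\eps^{-3}$ in (ii)--(iii).

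The main obstacle is the kernel estimate of the second paragraph: bounding $\int_u^v(v-r)^{2q\a}dr$ and $\int_0^u\big((v-r)^\a-(u-r)^\a\big)^{2q}dr$ uniformly in $t$ and checking that the H\"older exponents produce exactly $\theta=(\beta'-\beta)/(\beta\beta')$, all while keeping the random $\xi$ cleanly separated from the deterministic singular kernel so that the conditional sub-Gaussianity survives. Everything else --- the GRR machinery, the passage from increments to the supremum, and the bookkeeping of $C_t$ and $\kappa$ --- is uniform across the three cases and follows the same template.
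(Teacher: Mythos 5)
Your proposal follows essentially the same route as the paper: reduce the supremum to the oscillation, write each increment as a stochastic integral against the kernel $g_{s,r}$, bound its quadratic variation by H\"older's inequality (separating $\xi$ from the singular kernel) to get a power of $|v-u|$ times $\nu_t$ on the prescribed event, then feed this conditional sub-Gaussianity into the Garsia--Rodemich--Rumsey lemma with $p(\rho)\propto\rho^{\text{power}}$ and finish with Chebyshev's exponential inequality, exactly as in Section \ref{proofs}. The only differences are cosmetic reparametrizations (you keep the full H\"older exponent $2\theta=2\alpha+1/q$ in $|v-u|$ where the paper splits it as $(s-r)^{2\eps}t^{2\alpha-2\eps+1/q}$ with $\eps=\theta/2$, which shifts powers of $t$ between the threshold and the exponent but yields an equivalent statement up to constants), so this is the paper's argument.
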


Formally, the above inequalities are consistent (asymptotically when $t$ grows to infinity) with the classic ones recalled in \eqref{revuz} when $\alpha=0$ (or equivalently $\beta = \beta'=2$). For example, one can put $L\asymp t^{1/2+\eps}$ with $\eps=1/4$ in \eqref{inegexp2}.
\begin{remi}\label{rem+}
The above result have a straightforward proof if we are interested by exponential inequalities without the supremum with respect
to $s\in [0{,}t]$. For example to show that
\begin{align*}
    \mathbf P \left ( | M^{(\alpha)}_t |\ge u\ ,\ \mbox{$\big (\int_0^t |\xi_\tau|^{\beta'}d\tau \big )^{2/\beta'}\le \nu_t $ }\right ) & \le 2 \exp \left ( -  \frac{u^2}{4 \ C_{\beta,\beta'}\ t^{2(\beta'-\beta)/\beta\beta'} \ \nu_t}\right )
\end{align*}
when $\alpha<0$, it suffices to remark that, by Hölder's inequality, $(\int_{0}^t |\xi_\tau|^{\beta'}d\tau )^{2/\beta'}\le \nu_t$ implies that $$\int_0^t (t-s)^{2\alpha} \xi_s^2 ds \le C_{\beta,\beta'} t^{2(\beta-\beta')/\beta\beta'} \nu_t .$$ Thus the inequality is a consequence of the classical exponential inequality when one considers the martingale $(Z^t_u)_{0\le u\le t}$ defined by $Z^t_u = \int_{0}^u (t-s)^\alpha \xi_s dW_s$ ($t$ is consider as a fixed parameter).
\end{remi}
The uniform deviations stated in theorem \ref{bernstein} are a little bit more complicated than the one we described in the above remark. Their proofs are postponed in Section \ref{proofs}.

As a corollary of the above theorem, we obtain a weak law of large numbers for fractional martingales.
\begin{prop}\label{wlln}
Under Hypothesis \ref{hyp-M}, let $M^{(\alpha)}$ be a fractional martingale with $\alpha\in (-\frac{1}2{,}\frac{1}2)$ having the expression $M^{(\alpha)}_t = \int_0^t (t-s)^\a \xi_s dW_s$. We assume that the process $\xi$ is bounded (by a constant $c_\infty$). Then we have the following weak law of large numbers: suppose that there exists $a>0$ such that
$$\lim_{t\to\infty}\frac{\langle M^{(\a)} \rangle_{\beta,t}}{t^a}=+\infty\ \text{almost-surely}$$
then the following convergence holds in probability
\begin{align*}
\frac{\sup_{0\le s\le t} |M^{(\a)}_s|}{\langle M^{(\a)} \rangle_{\beta,t}} \xrightarrow[t\to\infty]{}  0\ .
\end{align*}
\end{prop}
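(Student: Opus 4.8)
The plan is to reduce the statement to a deterministic tail bound to which part (iii) of Theorem~\ref{bernstein} applies, this being the relevant case since $\xi$ is assumed bounded by $c_\infty$. Fix $\delta>0$; by definition of convergence in probability it suffices to show that $\p\big(\sup_{0\le s\le t}|M^{(\alpha)}_s|>\delta\,\langle M^{(\alpha)}\rangle_{\beta,t}\big)\to 0$ as $t\to\infty$. The difficulty is that the normalising denominator $\langle M^{(\alpha)}\rangle_{\beta,t}$ is random, so it cannot be inserted directly into the deterministic threshold of \eqref{inegexp3}. To circumvent this I split the event according to whether the $\beta$-variation exceeds the deterministic benchmark $t^a$ provided by the hypothesis, obtaining
$$\p\Big(\sup_{0\le s\le t}|M^{(\alpha)}_s|>\delta\,\langle M^{(\alpha)}\rangle_{\beta,t}\Big)\le \p\Big(\sup_{0\le s\le t}|M^{(\alpha)}_s|>\delta\,t^a\Big)+\p\big(\langle M^{(\alpha)}\rangle_{\beta,t}<t^a\big).$$
The second term tends to $0$: the almost-sure divergence $\langle M^{(\alpha)}\rangle_{\beta,t}/t^a\to+\infty$ entails convergence in probability to $+\infty$, whence $\p(\langle M^{(\alpha)}\rangle_{\beta,t}/t^a<1)\to 0$.

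It then remains to control the deterministic-threshold term $\p\big(\sup_{0\le s\le t}|M^{(\alpha)}_s|>\delta\,t^a\big)$. Here I invoke \eqref{inegexp3} and tune the free parameter $L$ so that its threshold matches $\delta\,t^a$, i.e. $L\,2^6\kappa\,c_\infty\,t^{1/2+\alpha-\eps}=\delta\,t^a$, which forces $L=\delta\,(2^6\kappa\,c_\infty)^{-1}\,t^{a-1/2-\alpha+\eps}$. Substituting this value into the exponent of \eqref{inegexp3} converts the bound into $C_t\exp\{-\delta^2(2^{12}c_\infty^2)^{-1}\,t^{2(a-1-2\alpha+2\eps)}\}$. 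Since $C_t=2+2^{1/2}t^2$ grows only polynomially in $t$, this upper bound tends to $0$ as soon as the exponent of $t$ inside the exponential is strictly positive, that is as soon as $\eps>(1+2\alpha-a)/2$.

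The main point, and the only place where the hypothesis $a>0$ is used, is to verify that such a choice of $\eps$ is admissible. Part (iii) requires $\eps\in(0,\tfrac12+\alpha)$, and the interval $\big(\max(0,(1+2\alpha-a)/2),\ \tfrac12+\alpha\big)$ is nonempty precisely because the inequality $(1+2\alpha-a)/2<(1+2\alpha)/2=\tfrac12+\alpha$ is equivalent to $a>0$. Fixing any admissible $\eps$ in this range, one checks moreover that the exponent of $t$ appearing in $L$, namely $a-1/2-\alpha+\eps$, exceeds $a/2>0$, so that $L\to+\infty$ and in particular $L\ge 1$ for all large $t$, as needed to apply \eqref{inegexp3}. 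Combining the two estimates of the first paragraph then yields the claimed convergence in probability. I expect the only genuine obstacle to be the bookkeeping in this parameter selection, since the probabilistic substance is entirely carried by Theorem~\ref{bernstein}(iii) together with the elementary observation that almost-sure divergence of $\langle M^{(\alpha)}\rangle_{\beta,t}/t^a$ makes $\p(\langle M^{(\alpha)}\rangle_{\beta,t}<t^a)$ vanish.
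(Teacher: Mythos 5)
Your proof is correct and follows essentially the same route as the paper: the same decomposition over the event $\{\langle M^{(\alpha)}\rangle_{\beta,t}\ge t^a\}$, the same application of Theorem \ref{bernstein}(iii) to the deterministic-threshold term, and the same choice of $\eps$ close to $\tfrac12+\alpha$ so that $a>1+2\alpha-2\eps$. Your additional check that $L\to+\infty$ (so $L\ge 1$ for large $t$) is a small point of care the paper leaves implicit, but the argument is the same.
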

\begin{proof}
With $\eta>0$ we use \eqref{inegexp3} to write that
\begin{align}\label{lo}
    \P \left ( \frac{\sup_{0\le s\le t} |M^{(\a)}_s|}{\langle M^{(\a)} \rangle_{\beta,t}} \ge \eta \right )
& \le \P \left (  \frac{\sup_{0\le s\le t} |M^{(\a)}_s|}{\langle M^{(\a)} \rangle_{\beta,t}}\ge \eta \ ,\  \langle M^{(\a)} \rangle_{\beta,t} \ge t^a \right ) \nonumber \\ %
& \hspace{15mm}+ \P \left (\langle M^{(\a)} \rangle_{\beta,t} \le t^a \right ) \nonumber \\
& \le \P \left ( \sup_{0\le s\le t} |M^{(\a)}_s| \ge \eta \ t^a \right ) + \P \left ( \langle M^{(\a)} \rangle_{\beta,t} \le t^a \right )   \nonumber \\
&\le C_t\exp\left ( -\frac{\eta^2\,t^{2a}}{128 \, c_\infty^2\, t^{2(1+2\a-2\eps)}}  \right ) +\P \left ( \langle M^{(\a)} \rangle_{\beta,t} \le t^a \right ) .
\end{align}
It suffices to choose $\eps$ closed to $1/2+\alpha$ such that $a>1+2\alpha-2\eps$ and the first term in the right hand side of \eqref{lo} tends to $0$ as $t$ goes to infinity. The second term in \eqref{lo} tends to $0$ because
$\lim_{t\to\infty}t^{-a}\langle M^{(\a)} \rangle_{\beta,t}=+\infty$ almost-surely.
\end{proof}
The following proposition provides a non trivial example of application of the above result.
\begin{prop}\label{apply}
For $H\in (0{,}1)$, let $B^H=(B^H_t)_{t\ge 0}$ be a fractional Brownian motion (see \cite{n} for details) adapted with respect to the filtration $(\mathcal F_t)_{t\ge 0}$ and let $\Phi$ be a bounded continuous function from $\mathbf{R}$ to $\mathbf{R}$. With $\alpha\in (-\frac{1}2{,}\frac{1}2)$, the fractional martingale $N^{(\a)}$ defined by $N^{(\alpha)}_t = \int_0^t (t-s)^\a \Phi(B_s^H)dW_s$ satisfies the weak law of large numbers
\begin{align}\label{wapp}
    \frac{\sup_{0\le s\le t}|N_s^{(\a)}|}{\langle N^{(\a)} \rangle_{\beta,t}} \xrightarrow[t\to\infty]{\P} 0 \ .
\end{align}
\end{prop}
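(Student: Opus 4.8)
The plan is to deduce \eqref{wapp} directly from Proposition~\ref{wlln}, so that the work consists in checking its two hypotheses for the choice $\xi_s=\Phi(B_s^H)$. First, since $B^H$ is continuous and adapted and $\Phi$ is continuous, the process $\xi$ is continuous and adapted, hence progressively measurable; and since $\Phi$ is bounded, $\xi$ is bounded by $c_\infty:=\sup_{x}|\Phi(x)|$. In particular Hypothesis~\ref{hyp-M} holds and the boundedness assumption of Proposition~\ref{wlln} is satisfied. Recalling the expression of the $\beta$-variation one has
\begin{align*}
\langle N^{(\a)}\rangle_{\beta,t}=c_\a\int_0^t|\Phi(B_s^H)|^\beta\,ds .
\end{align*}
Thus everything reduces to exhibiting some $a>0$ such that $t^{-a}\int_0^t|\Phi(B_s^H)|^\beta\,ds\to+\infty$ almost surely as $t\to\infty$ (the statement being understood in the non-degenerate case $\Phi\not\equiv 0$, without which $N^{(\a)}\equiv0$).

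Next I would bound the integrand below by an occupation time. Since $\Phi\not\equiv 0$ is continuous, there are $x_0$, $r>0$ and $\delta>0$ with $|\Phi(x)|^\beta\ge\delta$ for every $x$ in the interval $J:=[x_0-r,x_0+r]$. Consequently
\begin{align*}
\int_0^t|\Phi(B_s^H)|^\beta\,ds\ \ge\ \delta\,\theta_t,\qquad \theta_t:=\int_0^t\mathbf 1_{J}(B_s^H)\,ds .
\end{align*}
It therefore suffices to prove that the occupation time $\theta_t$ of the fixed interval $J$ by the fractional Brownian motion grows, almost surely, faster than a fixed positive power of $t$. The order of growth is easily guessed from the first moment: using $B_s^H\sim\mathcal N(0,s^{2H})$ and $e^{-x^2/(2s^{2H})}\ge e^{-x^2/2}$ for $s\ge1$, one gets $\e[\mathbf 1_J(B_s^H)]\ge c\,s^{-H}$ for $s\ge1$, whence $\e[\theta_t]\ge c'\,t^{1-H}$ for large $t$ (here $1-H>0$).

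The remaining task is to upgrade this mean estimate to an almost-sure lower bound of the same polynomial order. The route I would follow is a Chung-type (lower) law of the iterated logarithm for the occupation time of fractional Brownian motion, of the form
\begin{align*}
\liminf_{t\to\infty}\frac{\theta_t}{t^{1-H}(\log\log t)^{-H}}\ >\ 0\quad\text{almost surely},
\end{align*}
which yields $\theta_t\ge c\,t^{1-H}(\log\log t)^{-H}$ for all large $t$. Combined with the previous estimate this gives $t^{-a}\langle N^{(\a)}\rangle_{\beta,t}\to+\infty$ almost surely for every $a\in(0,1-H)$, and Proposition~\ref{wlln} then delivers \eqref{wapp}.

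The main obstacle is precisely this almost-sure lower bound on $\theta_t$. Self-similarity of $B^H$ gives the correct order $t^{1-H}$ only in distribution, and a naive Borel--Cantelli argument along a geometric sequence $t_k=2^k$ is not enough: the relevant events carry no independence (fBm has long-range dependence and no Markov or martingale structure), and even an \emph{infinitely often} conclusion would be destroyed by the monotone interpolation between the $t_k$, since a single long excursion of size $\sim t^{H}$ can freeze $\theta_t$ while $t$ keeps increasing. Overcoming this requires either the quantitative small-ball and local-time estimates behind the Chung-type law above, or, alternatively, a second-moment (Paley--Zygmund) argument based on explicit bounds for the covariance of $\mathbf 1_J(B_s^H)$ and $\mathbf 1_J(B_r^H)$; the reduction to $\theta_t$ and the final application of Proposition~\ref{wlln} are then routine.
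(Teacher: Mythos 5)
There is a genuine gap, and it stems from aiming at the wrong target. You try to verify the literal hypothesis of Proposition \ref{wlln}, namely the \emph{almost-sure} divergence of $t^{-a}\langle N^{(\a)}\rangle_{\beta,t}$, and this forces you into proving a polynomial almost-sure lower bound for the occupation time $\theta_t$ of a fixed interval by fractional Brownian motion. You are right that self-similarity only gives the order $t^{1-H}$ in distribution, that Borel--Cantelli along $t_k=2^k$ does not close the argument, and that one would need a Chung-type law of the iterated logarithm (or delicate second-moment/small-ball estimates) to conclude. But you neither prove nor cite such a result, and you explicitly leave it as ``the main obstacle''; as written the proof is therefore incomplete, and the missing step is a genuinely hard one.

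The observation you missed is that the almost-sure divergence is not needed. The conclusion \eqref{wapp} is only a convergence in probability, and the estimate \eqref{lo} established inside the proof of Proposition \ref{wlln} shows that it suffices to find $a>0$ with $\P\left(\langle N^{(\a)}\rangle_{\beta,t}\le t^a\right)\to 0$; for this, \emph{distributional} control of $\langle N^{(\a)}\rangle_{\beta,t}$ is enough. This is exactly what the paper does: writing $\int_0^t|\Phi(B^H_s)|^\beta ds = t\int_0^1|\Phi(B^H_{tu})|^\beta du$, using self-similarity and the occupation-times formula together with the joint continuity of the local time $L^H$, one gets
\begin{align*}
\frac{1}{t^{1-H}}\int_0^t|\Phi(B^H_s)|^\beta ds \xrightarrow[t\to\infty]{\mathrm{d}} \left(\mbox{$\int_{\mathbf{R}}|\Phi(z)|^\beta dz$}\right) L^H(1,0),
\end{align*}
a limit that is almost surely positive (when $\Phi\not\equiv 0$), so that $\P\left(\langle N^{(\a)}\rangle_{\beta,t}\le t^a\right)\to 0$ for any $a<1-H$. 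Your reduction to the occupation time of a level set of $\Phi$ and your first-moment computation identifying the order $t^{1-H}$ are both on the right track; the fix is to downgrade the requirement from almost-sure to distributional, after which the self-similarity argument you already half-invoke completes the proof with no law of the iterated logarithm at all.
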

\begin{proof}
As regard to \eqref{lo}, one have to find $a>0$  such that
\begin{align*}
&\P \left ( \langle N^{(\a)} \rangle_{\beta,t} \le t^a \right ) \xrightarrow[t\to\infty]{}  0\ .
\end{align*}
For that sake, we will make use of the local time $L^H(t,y)$ of $B^H$ at $y\in \mathbf{R}$ defined heuristically for $t\ge 0$ as $$ L^H(t,y) = \int_0^t \delta_y(B^H_s)ds. $$
It is known (see \cite{berman2,gh}) that $(t,y)\mapsto L^H(t,y)$ exists and is jointly continuous in $(t,y)$. By
the self-similarity property of the fractional Brownian motion, the distributions of $L^H(t,y)$ and $t^{1-H}L^H(1,yt^{-H})$ are equal. Using the occupation times formula, we may write that
\begin{align*}
    \int_0^t |\Phi(B^H_s)|^\beta ds & = t\int_0^1 |\Phi(B^H_{tu})|^\beta du \\ %
& \stackrel{\mathrm{d}}{=} t\int_0^1 |\Phi(t^{H}B^H_u)|^\beta du   \qquad \mbox{(in distribution)} \\
& = t\int_\mathbf{R} |\Phi(t^{H}y)|^\beta L^H(1,y) dy \\
& = t^{1-H}\int_\mathbf{R} |\Phi(z)|^\beta L^H(1,zt^{-H}) dz\ .
\end{align*}
By the bi-continuity of the local time, we finally obtain
\begin{align*}%\label{conv-local}
     \frac{1}{t^{1-H}}\int_0^t |\Phi(B^H_s)|^\beta ds  \xrightarrow[t\to\infty]{} \left (\mbox{$\int_{\mathbf{R}}|\Phi(z)|^\beta dz$}\right )\ L^H(1,0)
\end{align*}
in distribution. Consequently we have
\begin{align*}
 \P \left ( \langle N^{(\a)} \rangle_{\beta,t} \le t^a \right ) &  = \P \left ( \frac{\langle N^{(\a)} \rangle_{\beta,t} }{t^{1-H}} \le \frac{t^a}{t^{1-H}} \right ) \\
& \xrightarrow[t\to\infty]{}  \p \Big  ( \left (\mbox{$\int_{\mathbf{R}}|\Phi(z)|^\beta dz$}\right )\ L^H(1,0)  \le 0 \Big ) =0
\end{align*}
as soon as $a<1-H$. It remains to remark that such a choice of $a$ is always possible and the convergence \eqref{wapp} is thus a consequence of Proposition \ref{wlln}.
\end{proof}
To end this discussion about the law of large numbers for fractional martingales, one has to mention the following result. It has been used in \cite{s-stat} to investigate asymptotic properties of a nonparametric estimation of the drift coefficient in fractional diffusion.
\begin{prop}\label{conv00}
Let $\xi=(\xi_s)_{s\ge 0}$ is one dimensional, adapted process with respect to the filtration generated by a standard Brownian motion $W=(W_t)_{t\ge 0}$, such that for any $T>0$, $\int_{0}^T \xi_s^2ds <\infty$.

When $\alpha>0$ and $\int_0^{+\infty} \xi_s^2 ds =+\infty$ almost-surely, we have
\begin{align}\label{conv0}
 &\lim_{t\to\infty}\frac{\int_0^t (t-s)^\alpha \,\xi_s \,dW_s}{\int_0^t (t-s)^{2\alpha}\, \xi_s^2 \,ds} =0 \quad\text{almost-surely.}
\end{align}
\end{prop}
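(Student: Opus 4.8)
The plan is to remove the dependence on the parameter $t$ that sits inside the stochastic integral, reducing the whole statement to the single genuine martingale $N_s:=\int_0^s\xi_u\,dW_u$, whose bracket $A_s:=\int_0^s\xi_u^2\,du$ tends to $+\infty$ almost surely by assumption. Writing $M^{(\alpha)}_t=\int_0^t(t-s)^\alpha\,dN_s$ and integrating by parts on $[0,t-\eps]$ --- the boundary term $\eps^\alpha N_{t-\eps}$ tends to $0$ as $\eps\downarrow 0$ because $\alpha>0$ and $N$ is continuous, while the kernel $(t-s)^{\alpha-1}$ stays integrable at $s=t$ since $\alpha-1>-1$ --- one obtains
\begin{equation*}
M^{(\alpha)}_t=\alpha\int_0^t(t-s)^{\alpha-1}\,N_s\,ds .
\end{equation*}
The decisive gain is that now a single martingale $N$, hence a single almost-sure event, governs all times $t$ at once, which is exactly what an almost-sure statement over the continuum of $t$ requires.

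I would then feed in the classical strong law and law of the iterated logarithm for $N$: on $\{A_\infty=+\infty\}$ there are a finite random time $S_0$ and a finite random constant $C$ such that $|N_s|\le C\sqrt{A_s\log\log A_s}$ for all $s\ge S_0$, whereas the contribution of $[0,S_0]$ to the representation is $O(t^{\alpha-1})$ and therefore negligible. On the denominator side, the elementary lower bound
\begin{equation*}
V_t:=\int_0^t(t-s)^{2\alpha}\xi_s^2\,ds\ \ge\ (t/2)^{2\alpha}\,A_{t/2}
\end{equation*}
shows that $V_t$ grows at least like $t^{2\alpha}$; this ``lever arm'', by which the old mass of $A$ is weighted by a large power of $t-s$, is what ultimately forces the ratio to vanish. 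Combining the two, the proposition reduces to the purely deterministic estimate $\int_0^t(t-s)^{\alpha-1}\sqrt{A_s}\,ds=o(V_t)$ as $t\to\infty$, valid for every nondecreasing $A$ with $A_0=0$, $A_t<\infty$ and $A_\infty=+\infty$ (the slowly varying factor $\sqrt{\log\log A_t}$ is harmless).

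For this estimate I would split $[0,t]$ along the dyadic scale of $t-s$: on the shell where $t-s\in(2^{-k-1}t,2^{-k}t]$ one has $(t-s)^{\alpha-1}$ of order $(2^{-k}t)^{\alpha-1}$ and $\sqrt{A_s}\le\sqrt{m_k}$, where $m_k:=A_{t(1-2^{-k-1})}$ is nondecreasing in $k$ with $m_0=A_{t/2}$ and $m_k\uparrow A_t$. Summing the shells, and doing the same for $V_t$, reduces the claim --- with $q:=2^{-\alpha}\in(0,1)$ --- to the sequence inequality
\begin{equation*}
\sum_{k\ge0}q^{k}\sqrt{m_k}\ \le\ \frac{C}{\sqrt{m_0}}\,\sum_{k\ge0}q^{2k}m_k .
\end{equation*}
Setting $a_k:=q^{k}\sqrt{m_k}$, the monotonicity of $A$ reads precisely $a_{k+1}\ge q\,a_k$, and this one-sided control keeps the ratio $(\sum a_k)/(\sum a_k^2)$ bounded by an absolute constant divided by $a_0=\sqrt{m_0}$. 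The inequality then yields $\int_0^t(t-s)^{\alpha-1}\sqrt{A_s}\,ds\le C\,V_t/\sqrt{A_{t/2}}$, and since $A_{t/2}\to+\infty$ the ratio tends to $0$.

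I expect the last, purely deterministic, comparison to be the real obstacle. Because $M^{(\alpha)}$ is not a martingale in $t$, one cannot simply invoke a strong law; moreover the numerator kernel $(t-s)^{\alpha-1}$ is more singular at the diagonal than the denominator kernel $(t-s)^{2\alpha}$, so replacing $\sqrt{A_s}$ by $\sqrt{A_t}$, or applying Cauchy--Schwarz in one stroke, is hopelessly lossy when $A$ rises steeply just below time $t$: the mass of $dA$ then concentrates exactly where the numerator kernel is large and the denominator kernel is small. The role of the dyadic splitting, together with the constraint $a_{k+1}\ge q\,a_k$, is to respect this correlation and to let the far (old) mass of $A$ --- amplified by the lever arm in $V_t$ --- dominate the near-diagonal contributions; carrying this out robustly for an arbitrary, possibly very irregular, nondecreasing $A$ is where the genuine work lies.
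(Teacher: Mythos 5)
Your first step (integration by parts / stochastic Fubini, giving $M^{(\alpha)}_t=\alpha\int_0^t(t-s)^{\alpha-1}N_s\,ds$ with $N_s=\int_0^s\xi_u\,dW_u$) is exactly the paper's starting point. But from there the routes diverge, and yours has a fatal gap: the purely deterministic comparison you reduce everything to is false. Concretely, the sequence inequality $\sum_{k\ge0}q^k\sqrt{m_k}\le (C/\sqrt{m_0})\sum_{k\ge0}q^{2k}m_k$ fails for general nondecreasing $m_k$. Take $m_0=1$, fix $\delta\in(0,1)$, let $k_0$ be defined by $q^{2k_0}=\delta$, and set $m_k=1$ for $k<k_0$, $m_k=\delta q^{-2k}$ for $k_0\le k\le K$, $m_k=\delta q^{-2K}$ for $k>K$, with $K-k_0=\lceil\delta^{-1}\rceil$. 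This is nondecreasing (equivalently $a_k=q^k\sqrt{m_k}$ satisfies $a_{k+1}\ge qa_k$), yet $\sum_kq^k\sqrt{m_k}\ge(K-k_0)\sqrt{\delta}\approx\delta^{-1/2}$ while $\sum_kq^{2k}m_k\le\frac{1+q^2}{1-q^2}+(K-k_0+1)\delta+1=O(1)$; letting $\delta\downarrow0$ kills any constant $C$. The constraint $a_{k+1}\ge q\,a_k$ only prevents the $a_k$ from decaying too fast; it does not prevent a long plateau of small but non-summably-many equal values, which is exactly what defeats $\sum a_k\le (C/a_0)\sum a_k^2$. Translated back, the obstruction is an $A$ that grows like $\delta A_{t/2}(t/(t-s))^{2\alpha}$ over many dyadic shells just below $t$: there both $\int(t-s)^{\alpha-1}\sqrt{A_s}\,ds$ and $V_t$ pick up only logarithmic mass per shell, but in the ratio $\sqrt{\delta}:\delta$, so the numerator envelope is \emph{not} $o(V_t)$ uniformly over admissible $A$ (and one can string such windows along a sequence $t_n\to\infty$ inside a single nondecreasing $A$ with $A_\infty=\infty$). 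This is precisely the "mass concentrating where the numerator kernel is large and the denominator kernel is small" scenario you flag; the dyadic splitting does not overcome it.

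The deeper reason the gap is not patchable within your scheme is that replacing $N_s$ by the LIL envelope $C\sqrt{A_s\log\log A_s}$ and then taking absolute values under the singular kernel $(t-s)^{\alpha-1}$ discards the only structure that saves the day. The paper instead writes $N_s=A_s\cdot(N_s/A_s)$, invokes only the strong law $N_s/A_s\to0$ on $\{A_\infty=\infty\}$, and observes that the denominator, rewritten by the \emph{same} Fubini identity, is $\alpha\int_0^t(t-s)^{\alpha-1}A_s\,ds$ --- so the quotient is a genuine weighted average of $N_s/A_s$ with weights $(t-s)^{\alpha-1}A_s$, handled by a fractional Toeplitz lemma with no loss, however irregular $A$ is. If you want to salvage your argument, you should aim at the normalized quantity $N_s/A_s$ rather than at an absolute envelope for $N_s$. (A separate caveat, independent of your proof: the paper's own computation actually produces $\int_0^t(t-s)^{\alpha}\xi_s^2\,ds$ in the denominator, not $\int_0^t(t-s)^{2\alpha}\xi_s^2\,ds$ as stated, so the matching of your target to the literal statement deserves care in any case.)
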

we notice that the assumption on the divergence of the quadratic variation of the martingale $(\int_0^t\xi_sdW_s)_{t\ge 0}$ is more common. Nevertheless this result is not a straightforward application of the techniques used in the martingale case when $\alpha=0$. The proof of  \eqref{conv0} is based on a fractional version of the Toeplitz lemma and is postponed in Section \ref{proof_conv00}.
%ooooooooooooooooooooooooooooooooooooooooooooooooooooooooooooooooooooooooooooooooooooooooooooooooooooooo
%ooooooooooooooooooooooooooooooooooooooooooooooooooooooooooooooooooooooooooooooooooooooooooooooooooooooo
%ooooooooooooooooooooooooooooooooooooooooooooooooooooooooooooooooooooooooooooooooooooooooooooooooooooooo
%ooooooooooooooooooooooooooooooooooooooooooooooooooooooooooooooooooooooooooooooooooooooooooooooooooooooo
%ooooooooooooooooooooooooooooooooooooooooooooooooooooooooooooooooooooooooooooooooooooooooooooooooooooooo
%ooooooooooooooooooooooooooooooooooooooooooooooooooooooooooooooooooooooooooooooooooooooooooooooooooooooo
\section{Proof of Theorem \ref{bernstein}}\label{proofs}
%\subsection*{Proof of theorem \ref{bernstein}}
Exponential inequalities for continuous martingales have attracted a lot of attention (see for example \cite{cfn,ls,pena}).
Due to our fractional framework, the technics used in the aforementioned works are useless. Our methodology is closed to the one used in \cite[Theorem 2]{nrovira} (see also \cite{rs,sowers}).
That being said we need the following lemma.
\begin{lemme}\label{mayo}
Let $\eps>0$ satisfying $\a <\eps < 1$. Then there exists a constant $C=C_{\a,\eps}$ such that
\begin{align}\label{ineq}
    \left | (u+h)^{\a}-u^{\a} \right | & \le C\ h^{\eps}\ u^{\a-\eps}\ , \ \ \forall u>0,\ h>0\ .
\end{align}
\end{lemme}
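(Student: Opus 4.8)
The plan is to prove \eqref{ineq} directly, splitting the argument according to whether $h$ is smaller or larger than $u$. The relevance of this threshold is explained by a scale-invariance observation: substituting $u\mapsto\lambda u$ and $h\mapsto\lambda h$ multiplies both sides of \eqref{ineq} by $\lambda^{\a}$, so the inequality is homogeneous of degree $\a$, and the two regimes $h\le u$ and $h>u$ correspond to the two ways the increment $(u+h)^{\a}-u^{\a}$ behaves.

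When $h\le u$, I would start from the mean value theorem: there is $\theta\in(u,u+h)$ with $(u+h)^{\a}-u^{\a}=\a\,\theta^{\a-1}h$. Since $\a<1$, the map $v\mapsto v^{\a-1}$ is decreasing, hence $\theta^{\a-1}\le u^{\a-1}$ and therefore $|(u+h)^{\a}-u^{\a}|\le|\a|\,h\,u^{\a-1}$. Writing $h=h^{\eps}h^{1-\eps}$ and using $h^{1-\eps}\le u^{1-\eps}$, which is valid because $1-\eps>0$ and $h\le u$, converts the surplus factor $h^{1-\eps}$ into $u^{1-\eps}$; combining it with $u^{\a-1}$ gives exactly $u^{\a-\eps}$, so \eqref{ineq} holds on this range with constant $|\a|$.

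When $h>u$, I would bound the increment crudely and let the positivity of $\eps$ absorb the excess. If $\a>0$, then $(u+h)^{\a}-u^{\a}\le(2h)^{\a}=2^{\a}h^{\eps}h^{\a-\eps}$, and since $\a-\eps<0$ the decreasing map $v\mapsto v^{\a-\eps}$ yields $h^{\a-\eps}\le u^{\a-\eps}$. If $\a<0$, then $|(u+h)^{\a}-u^{\a}|=u^{\a}-(u+h)^{\a}\le u^{\a}=u^{\eps}u^{\a-\eps}\le h^{\eps}u^{\a-\eps}$, using $u^{\eps}\le h^{\eps}$. Taking $C=\max(|\a|,2^{\a},1)$ then covers both regimes, which completes the argument.

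The only delicate point is obtaining the exponent $\eps$ rather than $1$: a bare application of the mean value theorem produces the factor $h$, not $h^{\eps}$, so the whole idea is to trade the surplus power $h^{1-\eps}$ against a power of $u$ in the regime $h\le u$, and to trade a surplus power of $u$ against $h^{\eps}$ in the regime $h>u$. This is exactly what the case split on $h\lessgtr u$ makes possible. As an alternative to the explicit computation, one could normalize $u=1$ by homogeneity and observe that $h\mapsto|(1+h)^{\a}-1|/h^{\eps}$ is continuous on $(0,\infty)$ with limit $0$ at both endpoints (because $1-\eps>0$ near $0$ and $\a-\eps<0$ near $\infty$), hence bounded, giving \eqref{ineq} with $C$ equal to its supremum.
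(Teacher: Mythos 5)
Your proof is correct, and it takes a genuinely different route from the paper's. The paper also begins with the homogeneity reduction (setting $h=xu$, so that \eqref{ineq} becomes $|1-(1+x)^{\a}|\le Cx^{\eps}$ for all $x>0$), but then proves this one-variable inequality by a calculus argument: it introduces the auxiliary functions $F_C(x)=-1+(1+x)^{\a}-Cx^{\eps}$ and $G_C(x)=1-(1+x)^{\a}-Cx^{\eps}$, which vanish at $0$, and shows their derivatives are negative provided $C\ge \frac{|\a|}{\eps}\sup_{x\ge 0}\{x^{1-\eps}/(1+x)^{1-\a}\}$ (a finite supremum precisely because $\a<\eps<1$). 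You instead split into the regimes $h\le u$ and $h>u$ (equivalently $x\le 1$ and $x>1$), using the mean value theorem plus the monotonicity of $v\mapsto v^{\a-1}$ in the first regime and crude monotonicity bounds in the second; your closing remark about the boundedness of the continuous function $x\mapsto|(1+x)^{\a}-1|/x^{\eps}$, which tends to $0$ at both endpoints, is yet a third (soft) argument. All three are valid. What your version buys is a constant $C=\max(|\a|,2^{\a},1)$ that is manifestly independent of $\eps$ — a property the paper asserts at the end of its proof but leaves to the reader, and which is not immediately visible from its formula because of the $|\a|/\eps$ prefactor. What the paper's version buys is a single uniform computation with no case analysis on the sign of $\a$ beyond choosing which of $F_C$, $G_C$ to examine. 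One tiny point of care in your write-up: the mean value theorem step should be read as covering $\a=0$ trivially (both sides vanish), and your constant should be understood as covering all of $\a\in(-\frac12,\frac12)$, which it does since $\max(|\a|,2^{\a},1)\le\sqrt 2$ there.
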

\begin{proof}
With $h=xu$, Inequality \eqref{ineq} is equivalent to
$$    \left | 1-(1+x)^{\a} \right |  \le C\ x^{\eps}\ , \ \ \forall x>0\ .$$
According to the cases we need to prove that
$$\left\{
    \begin{array}{ll}
   -1+(1+x)^{\a} & \le C\ x^{\eps},\ \forall x>0\ \text{when $\a>0$} \\ %
     1-(1+x)^{\a} & \le C\ x^{\eps},  \ \forall x>0\ \text{when $\a<0$} \ .
  \end{array}
  \right.
$$
We denote $F_C$ and $G_C$ the functions defined for $x\ge 0$ by
\begin{align*}
F_C(x) & = -1 + (1+x)^{\a}-Cx^\eps\quad \text{and} \\ %
 G_C(x)& = 1 - (1+x)^{\a}-Cx^\eps .
\end{align*}
Then $F_C(0)=G_C(0)= 0 $,
\begin{align*}
F_C'(x) & = \a(1+x)^{\a-1}-C\eps x^{\eps-1}\quad\text{and} \\
G_C'(x) & = -\a(1+x)^{\a-1}-C\eps x^{\eps-1}\ .
\end{align*}
We have to prove that there exists a constant $C$ depending on $\a$ and $\eps$ such that
$$\left\{
    \begin{array}{ll}
      F_C'(x)<0, & \hbox{$\forall x>0$ when $\a>0$;} \\
      G_C'(x)<0, & \hbox{$\forall x>0$ when $\a<0$.}
    \end{array}
  \right.
$$
Further calculations show that if we choose $C$ such that
$$ C \ge \frac{|\a|}{\eps} \ \sup_{x\ge 0}\left \{ \frac{x^{1-\eps}}{(1+x)^{1-\a}}\right \},$$
then \eqref{ineq} is true. It is easy to check that we may find a constant $C$ satisfying the above inequality and that is independent of $\eps$.
\end{proof}
Now we prove Theorem \ref{bernstein}.
\begin{proof}
%For fixed $t>0$ we consider $Z=(Z_r)_{0\le r \le t}$ the process defined by
%$$ Z_r = \int_0^r (r-\tau)^{\a} \xi_\tau\ dW_\tau .$$
We follow the arguments developed in \cite{nrovira}. The inequality \eqref{inegexp} will be a consequence of
Chebyshev's exponential inequality involving the random variable $\sup_{0\le s\le t}|M_s^{(\alpha)}|$. So the first step is to apply the Garsia-Rodemich-Rumsey inequality in order to have bounds on this random variable.
With $\Psi(x) = \exp( -x^2/4)$ and $p$ a continuous, non-negative function on $(0,t)$ such that $p(0)=0$, Lemma 1.1 in \cite{grr} reads as follows: for all $0\le r\le s\le t$ we have
\begin{align}\label{garsia}
|M_s^{(\alpha)}-M_r^{(\alpha)}| & \le 8 \ \int_{0}^{|s-r|} \Psi^{-1} \left ( \frac{4B}{y^2} \right ) dp(y)
\end{align}
provided that
\begin{align*}
B & := \int_0^t\int_0^t \Psi\left ( \frac{M_s^{(\alpha)}-M_r^{(\alpha)}}{p(|s-r|)} \right )dsdr <\infty \ .
\end{align*}
The function $p$ will be chosen later. We notice that the function $\Psi^{-1} $ is defined for $u\ge \Psi(0)$ as $\Psi^{-1}(u)  = \sup\{ v; \Psi(v)\le u\}$.
With $\ln^+$ the function defined by $\ln^+(z)=\max (\ln(z),0)$ for $z\ge 0$, the inequality
$B/y^2 \le \exp(\ln^+(B/y^2))$ implies that
$$\Psi^{-1}\left (\frac{B}{y^2}\right )\le 2\left ( \ln^+ \left (\frac{B}{y^2}\right ) \right )^{1/2}.$$
Further calculations show that $$ \left ( \ln^+ \left (\frac{B}{y^2}\right ) \right)^{1/2} \le
2^{1/2} \left \{ \left ( \ln^+(B)\right )^{1/2} +  \left (\ln^+(y^{-2}\right )^{1/2} \right \}\ .$$
Since $M_0^{(\alpha)}=0$, we deduce from \eqref{garsia} that
\begin{align}\label{garsia2}
\sup_{0\le s\le t}| M_s^{(\alpha)}| & \le 2^{9/2} \ \int_{0}^{t} \left \{ \left ( \ln^+(B)\right )^{1/2} +  \left (\ln^+(y^{-2}\right )^{1/2} \right \} dp(y) .
\end{align}
In the following we will need an estimate of the expectation of the random variable $B$. This will be possible
because  a martingale with bounded quadratic variations will appear by means of the increments of $M^{(\a)}$. We fix $t$ and for any $0\le r <s <t $ we write
$$ M^{(\alpha)}_s - M^{(\alpha)}_r = \int_0^s g_{s,r}(\tau) \,dW_\tau$$ with $g_{s,r}(\tau) = \xi_\tau(s-\tau)^{\a} \1_{\{r<\tau\le s\}} + \xi_\tau ( (s-\tau)^{\a}-(r-\tau)^{\a}) \1_{\{\tau\le r\}}$.  We first notice that
\begin{align}\label{inc}
    \int_0^t | g_{s,r}(\tau)|^2 d\tau  =&  \int_r^s (s-\tau)^{2\a} |\xi_\tau|^2 d\tau
  + \int_0^r  ( (s-\tau)^{\a}-(r-\tau)^{\a})^2 |\xi_\tau|^2 d\tau
\end{align}
In order to have some estimates of the quantity $\int_0^t | g_{s,r}(\tau)|^2 d\tau$, we treat different cases according to the sign of $\a$ and according to the assumption we made on the process $\xi$.
\subsubsection*{Case (i):  we assume Hypothesis \ref{hyp-M} and $\a<0$}
With $\beta'>\beta$ from Hypothesis \ref{hyp-M}, we denote $p=\beta'/2>1$ and $q=\beta'/(\beta'-2)$ its conjugate. Let $\eps>0$ to be fixed later. Starting from \eqref{inc}, we use Lemma \ref{mayo} to write
\begin{align}\label{i0}
    \int_0^t | g_{s,r}(\tau)|^2 d\tau
 \le & (s-r)^{2\eps} \left [ \int_r^s (s-\tau)^{2\a-2\eps} \xi_\tau^2 d\tau + \int_0^r (r-\tau)^{2\a-2\eps}\xi_\tau^2 d\tau \right ]  \\ %
\le & (s-r)^{2\eps} \Big [ I_1+I_2\Big ] \label{i1}
\end{align}
with obvious notations.
Now we choose $\eps$ such that $1+2\a q-2\eps q>0$. We remark that such a choice is always possible. Indeed
\begin{align*}
1+2\a q-2\eps q & = \frac{2}{\beta(\beta'-2)}\Big [ (\beta'-\beta)-\eps\beta\beta'\Big ]
\end{align*}
and then choosing $\eps$ of the form $\eps = a(\beta'-\beta)/\beta\beta'$ with $a\in (0{,}1)$, we remark that
$$ 0< 1+2\a q-2\eps q < \frac{2(\beta'-\beta)}{\beta(\beta'-2)}\ .$$
We chose $a=1/2$, henceforth $\eps$ is fixed as
\begin{align*}
\eps &  = \frac{1}2 \frac{\beta'-\beta}{\beta\beta'}\ .
\end{align*}
By H\"{o}lder's inequality we obtain
\begin{align}\label{i11}
    I_1 & \le \left ( \int_r^s (s-\tau)^{(2\a-2\eps)q} d\tau \right )^{1/q} \left ( \int_r^s  \xi_\tau^{2p} d\tau \right )^{1/p} \nonumber \\ & \le C_{\beta,\beta'}\  (s-r)^{2\a-2\eps +1/q} \ \|\xi\|_{\mathbf{L}^{2p}(0{,}t)}^2
\end{align}
with
\begin{align*}
& C_{\beta,\beta'} = \left [ \frac{\beta(\beta'-2)}{\beta'-\beta}\right ]^{\frac{\beta'-2}\beta}\ .
\end{align*}
Similarly we obtain the following estimation for $I_2$:
\begin{align}\label{i12}
    I_2 & \le C_{\beta,\beta'}\  r^{2\a-2\eps +1/q} \ \|\xi\|_{\mathbf{L}^{2p}(0{,}t)}^2\ .
\end{align}
Reporting \eqref{i11} and \eqref{i12} in \eqref{i1} yields
\begin{align}\label{ii}
    \int_0^t | g_{s,r}(\tau)|^2 d\tau
 \le & \ 2\ C_{\beta,\beta'}\  (s-r)^{2\eps}\ t^{2\a-2\eps +1/q} \ \|\xi\|_{\mathbf{L}^{2p}(0{,}t)}^2\ .
\end{align}
On the event $\mathcal A_t = \{ (\int_0^t \xi_\tau^{2p} d\tau )^{1/p} \le \nu_t\}$ it holds that
\begin{align*}
    \int_0^t | g_{s,r}(\tau)|^2 d\tau
 \le & \ 2\ C_{\beta,\beta'}\  (s-r)^{2\eps}\ t^{2\a-2\eps +1/q} \ \nu_t\ .
\end{align*}
Now it is clear that the function $p$ must be defined as
\begin{align}\label{p}
p(y) & = (2C_{\beta,\beta'})^{1/2}t^{\a-\eps +1/2q} \nu_t^{1/2} y^{\eps}
\end{align}
and for fixed $r<s$, we consider the martingale $M=(M_u)_{0\le u\le t}$ defined by
\begin{align}\label{Mu}
& M_u = \int_0^u \frac{g_{s,r}(\tau)}{p(s-r)}\,dW_\tau \ .
\end{align}
Its quadratic variation satisfies for any $0\le u\le t$
$$ \langle M\rangle_u \le \int_0^t \frac{|g_{s,r}(s)|^2}{p(s-r)}\,ds \le 1 $$
almost-surely on $\mathcal A_t$. Let $W$ the Dambis, Dubins-Schwarz Brownian motion associated to the martingale $M$ such that $M_u=W_{\langle M\rangle_u}$. We have
\begin{align*}
   \mathbf E \left [ \Psi\left ( \frac{M_s^{(\alpha)}-M_r^{(\alpha)}}{p(|s-r|)} \right ) \1_{\mathcal A_t}  \right ]
   & = \mathbf E \left [ \exp\left ( \frac{M_t^2}4 \right ) \1_{\mathcal A_t}  \right ] \\
   & \le \mathbf E \left [ \exp\left ( \frac{1}4 \sup_{0\le r\le 1}|W_r|^2 \right ) \1_{\mathcal A_t}  \right ] \\
   & \le 2^{1/2} \ .
\end{align*}
Consequently $\mathbf{E} (B\1_{\mathcal A_t}) \le 2^{1/2} t^2$ and
\begin{align*}
& \mathbf{E}(\exp\{ \1_{\mathcal A_t} \ln^+(B)\}) \le 1+\mathbf E(\1_{\mathcal A_t}\exp\{  \ln^+(B)\}) \le 2+2^{1/2}t^2 :=C_t\ .
\end{align*}
We use the inequality
\begin{align}
\label{kappa}
\int_0^t \left (\ln^+(y^{-2})\right )^{1/2} y^{\eps-1}dy \le 2^{1/2}\int_0^{+\infty} z^{1/2} e^{-\eps z} dz = \frac{1}{\eps}\sqrt{\frac{\pi}{2\eps}}:=\kappa\ ,
\end{align}
in order to rewrite \eqref{garsia2} (with $p$ defined in \eqref{p}) as
\begin{align}\label{garsia3}
\sup_{0\le s\le t}| M_s^{(\alpha)}| & \le 2^{9/2} \left [ \left ( \ln^+(B)\right )^{1/2}t^{\eps}  +  \kappa \right ]
(2C_{\beta,\beta'})^{1/2}t^{\a-\eps +1/2q} \nu_t^{1/2} \nonumber \\
& \le \left [ \left ( \ln^+(B)\right )^{1/2}t^{\eps}  +  \kappa \right ] \times c_1(t)
\end{align}
with $c_1(t) = 32\,C_{\beta,\beta'}^{1/2}\,t^{\a-\eps +1/2q}\, \nu_t^{1/2}$.
No we end the proof with Chebishev's exponential inequality.
For $L\ge 1$ we have
\begin{align*}
    \p \left ( \sup_{0\le s\le t}| M_s^{(\alpha)}| \ge 2 L \kappa c_1(t) \ ,\ \mathcal A_t   \right )  & \le
    \p \left ( \left \{ \ln^+(B) \ge \frac{1}{t^{2\eps}} \left ( \frac{2 L\kappa c_1(t)}{c_1(t)}-\kappa\right )^2  \right \} \ \cap \mathcal A_t \right ) \\
    & \le  \mathbf E \big [ \exp\left ( \1_{\mathcal A_t} \ln^+(B) \right ) \big ] \ \exp\left \{  -\left ( \frac{\kappa\,(2L-1)}{t^{\eps}}\right )^2  \right \} \\
      &\le  C_t \ \exp\left \{  -\frac{\kappa^2\,L^2}{t^{2\eps}}  \right \}\ .
\end{align*}
We recall that $4\eps=2\alpha+1/q = 2(\beta'-\beta)/\beta\beta'$ and the expression \eqref{inegexp} is a consequence of the above inequality with the notation
\begin{align}
\label{c1}
{c}_1 & = 2^{11/2}\, \pi^{1/2}\, \left ( \frac{\beta\beta'}{\beta'-\beta}\right )^{3/2}\times \left [ \frac{\beta(\beta'-2)}{\beta'-\beta}\right ]^{\frac{\beta'-2}{2\beta}}\ .
\end{align}
\subsubsection*{Case (ii): we assume Hypothesis \ref{hyp-M} and  $\a>0$}
By Hypothesis \ref{hyp-M}, $\int_0^t \xi_\tau^2 d\tau $ exists almost-surely. Using \eqref{inc}, we replace \eqref{i0} by \begin{align}\label{iii}
    \int_0^t | g_{s,r}(\tau)|^2 d\tau
 \le & (s-r)^{2\a} \int_r^s \xi_\tau^2 d\tau + (s-r)^{2\eps}\int_0^r (r-\tau)^{2\a-2\eps}\xi_\tau^2 d\tau   \nonumber \\ %
 \le & (s-r)^{2\a} \int_r^s \xi_\tau^2 d\tau + (s-r)^{2\eps}\ r^{2\a-2\eps}\int_0^r \xi_\tau^2 d\tau   \nonumber \\ %
\le & 2(s-r)^{2\eps}\ t^{2\a-2\eps}\int_0^t \xi_\tau^2 d\tau
\end{align}
with $0<\eps<\a$. The rest of the proof is similar with the following modifications. We use the martingale $M$ defined by \eqref{Mu} with the new function $p$ defined by $p(y)= 2^{1/2}t^{\alpha-\eps}\nu_t^{1/2}y^\eps$. On the event $\{ \int_0^t \xi_\tau^{2} d\tau  \le \nu_t\}$, $M$ has also a quadratic variation bounded by $1$. The inequality \eqref{garsia3} is replaced by
\begin{align*}
\sup_{0\le s\le t}| M_s^{(\alpha)}|
& \le \left [ \left ( \ln^+(B)\right )^{1/2}t^{\eps}  +  \kappa \right ] \times 32\,t^{\a-\eps}\, \nu_t^{1/2}
\end{align*}
where $\kappa$ is defined in \eqref{kappa}. The rest of the proof is identical.
\subsubsection*{Case (iii): we assume that $\xi$ is bounded}
When $\a\in (-\frac{1}2{,}0)$, there exists $\eps>0$ such that $1+2\a-2\eps >0$. Then from \eqref{i0}, it is easy to see that \eqref{ii} may be replaced by
\begin{align}\label{iibis}
    \int_0^t | g_{s,r}(\tau)|^2 d\tau
 \le & \ 2\, c_{\infty}^2\,  (t-r)^{2\eps}\, t^{1+2\a-2\eps}.
\end{align}
When $\a>0$, \eqref{iii} may also be replaced by \eqref{iibis}.

The rest of the proof is similar to the previous case with
the help of the function $p$ defined by $p(y)= (2c_\infty)^{1/2}t^{1/2+\alpha-\eps}y^\eps$.
\end{proof}

\section{Proof of Proposition \ref{conv00}}\label{proof_conv00}
The result is based of the following fractional version of the Toeplitz lemma.
\begin{lemme}\label{toeplitz}
Let $\a>0$. Let $(x_t)_{t\ge 0}$ be a continuous real function such that $\lim_{t\to\infty}x_t = x$ and let  $(\gamma_t)_{t\ge 0}$ be a measurable and positive. Then it holds that
$$\frac{\int_0^t (t-s)^{\alpha-1} \left ( \int_0^s\gamma_r dr \right ) x_s ds  }{\int_0^t (t-s)^{\alpha-1} \left ( \int_0^s\gamma_r dr \right ) ds } \xrightarrow[t\to\infty]{}x, $$
provided that $\lim_{t\to\infty}  \int_0^t \gamma_sds =+\infty$.
\end{lemme}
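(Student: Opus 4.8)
The plan is to adapt the classical proof of the Toeplitz lemma: the weights $a_{t,s}=(t-s)^{\alpha-1}\big(\int_0^s\gamma_r\,dr\big)$ are nonnegative, and because $\alpha>0$ the singularity of $(t-s)^{\alpha-1}$ at $s=t$ is integrable, so both integrals are well defined (recall $x_s$ is continuous, hence locally bounded, and $s\mapsto\int_0^s\gamma_r\,dr$ is nondecreasing and locally bounded). Writing $G(s)=\int_0^s\gamma_r\,dr$, and denoting by $D_t$ the denominator and $N_t$ the numerator, the quantity to control is
\[
\frac{N_t}{D_t}-x=\frac{1}{D_t}\int_0^t (t-s)^{\alpha-1}G(s)\,(x_s-x)\,ds .
\]
The guiding idea is that, as $t\to\infty$, the weights concentrate their mass near $s=t$ (both because $G(s)\to\infty$ and because of the kernel), so the weighted average of $x_s-x$ is governed by the values of $x_s$ for $s$ close to $t$, which are close to $0$.

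First I would fix $\eps>0$ and use $x_s\to x$ to choose $T$ with $|x_s-x|\le\eps$ for all $s\ge T$, then split the integral at $T$. The tail part $\int_T^t$ is immediately controlled: since the weights are nonnegative and $\int_T^t(t-s)^{\alpha-1}G(s)\,ds\le D_t$, its contribution to $|N_t/D_t-x|$ is at most $\eps$. The head part $\int_0^T$ I would bound crudely, using $G(s)\le G(T)$ on $[0,T]$ and $M_T:=\sup_{0\le s\le T}|x_s-x|<\infty$, by $M_T\,G(T)\int_0^T(t-s)^{\alpha-1}\,ds$.

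The decisive step is a lower bound on $D_t$ that dominates this head term. Restricting the integral defining $D_t$ to $[t/2,t]$ and using the monotonicity of $G$ gives
\[
D_t\ \ge\ G(t/2)\int_{t/2}^t (t-s)^{\alpha-1}\,ds\ =\ G(t/2)\,\frac{(t/2)^{\alpha}}{\alpha}\ \xrightarrow[t\to\infty]{}\ +\infty ,
\]
since $\alpha>0$ and $G(t/2)\to\infty$. Comparing with the head bound, I would check that $\int_0^T(t-s)^{\alpha-1}\,ds$ grows at most like $t^{\max(\alpha-1,0)}$ up to constants: it is $\le T(t-T)^{\alpha-1}\to0$ when $\alpha<1$, and of order $T\,t^{\alpha-1}$ when $\alpha\ge1$ (by the mean value theorem). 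Hence the head contribution is $O\!\big(t^{\max(\alpha-1,0)}/(G(t/2)\,t^{\alpha})\big)$, which tends to $0$ in every case. Letting $t\to\infty$ yields $\limsup_{t\to\infty}|N_t/D_t-x|\le\eps$, and since $\eps>0$ is arbitrary the lemma follows.

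The main obstacle is precisely this last comparison: the kernel $(t-s)^{\alpha-1}$ behaves qualitatively differently on the head $[0,T]$ according to whether $\alpha<1$ (singular near $s=t$ but decaying on the head) or $\alpha\ge1$ (growing), so the estimate must be organised to cover both regimes. It is the divergence $G(t)\to\infty$, rather than any property of the kernel alone, that ultimately forces the head-to-denominator ratio to vanish and drives the convergence.
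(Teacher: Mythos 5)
Your proof is correct, and the overall skeleton (split the weighted average at a point $T$ beyond which $|x_s-x|\le\eps$, bound the tail contribution by $\eps$, and show that the head-to-denominator ratio vanishes) is the same as the paper's. The genuine difference lies in how the head term is controlled. The paper applies Fubini's theorem twice: first to identify the denominator with $\frac1\a\int_0^t(t-s)^\a\gamma_s\,ds$, and then to rewrite the ratio head/denominator as $\int_0^A\gamma_r[(t-r)^\a-(t-A)^\a]dr\big/\int_0^t(t-r)^\a\gamma_r\,dr$, which it bounds by $2^\a\int_0^A\gamma_r\,dr\big/\int_0^{t/2}\gamma_r\,dr\to0$. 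You avoid Fubini altogether by exploiting the monotonicity of $G(s)=\int_0^s\gamma_r\,dr$: the head is at most $M_T\,G(T)\int_0^T(t-s)^{\a-1}ds$ and the denominator is at least $G(t/2)\,(t/2)^\a/\a$, which yields essentially the same final bound $\asymp G(T)/G(t/2)$. Your route is arguably more elementary; note also that your case distinction between $\a<1$ and $\a\ge1$ is unnecessary, since $\int_0^T(t-s)^{\a-1}ds=\frac1\a\bigl(t^\a-(t-T)^\a\bigr)\le t^\a/\a$ for every $\a>0$, which already gives a head contribution of order $G(T)/G(t/2)\to0$ uniformly in both regimes.
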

\begin{proof}
Let $\eps>0$ and $A$ be such that $|x_s-x|<\eps$ for $s>A$. We denote $C_A=\sup_{s\le A}|x_s-x|$.
By Fubini's theorem
$$  \int_0^t (t-s)^\alpha \gamma_s  ds  =\alpha\int_0^t (t-s)^{\alpha-1}\left ( \int_0^s \gamma_r dr\right ) ds \ , $$
and we write for $t>A$
\begin{align}\label{ioio}
\left |\frac{\int_0^t (t-s)^{\alpha-1} \left ( \int_0^s\gamma_r dr \right ) x_s ds  }{\int_0^t (t-s)^{\alpha-1} \left ( \int_0^s\gamma_r dr \right ) ds } - x \right | & \le \frac{\int_0^t (t-s)^{\alpha-1} \left ( \int_0^s\gamma_r dr \right ) | x_s-x | ds  }{\int_0^t (t-s)^{\alpha-1} \left ( \int_0^s\gamma_r dr \right ) ds } \nonumber \\ %
& \le \eps + C_A\ \frac{\int_0^A (t-s)^{\alpha-1} \left ( \int_0^s\gamma_r dr \right ) ds  }{\int_0^t (t-s)^{\alpha-1} \left ( \int_0^s\gamma_r dr \right ) ds } \ .
\end{align}
Another application of Fubini's theorem implies that
\begin{align*}
  \frac{\int_0^A  \left ( \int_r^A(t-s)^{\alpha-1} ds \right ) \gamma_rdr  }{\int_0^t  \left ( \int_r^t(t-s)^{\alpha-1} ds \right ) \gamma_rdr } & =  \frac{\int_0^A \gamma_r \left [ (t-r)^\a -(t-A)^\a \right ] dr }{\int_0^t (t-r)^\a \gamma_rdr} \nonumber \\
& \le  \frac{\int_0^A (t-r)^\a \gamma_r dr }{\int_0^t (t-r)^\a \gamma_rdr}  \\ &
\le  \frac{t^\a \int_0^A \gamma_r dr }{\int_0^{t/2} (t-r)^\a \gamma_r dr } \nonumber \\
& \le  \frac{t^\a \int_0^A \gamma_r dr }{(t/2)^\a \int_0^{t/2}  \gamma_r dr }
\end{align*}
and the last term tends to $0$ as $t$ tends to $\infty$. We report this convergence in \eqref{ioio} and we obtain the result.
\end{proof}
Now we prove \eqref{conv0}.
\begin{proof}
By the stochastic Fubini theorem
$$ \int_0^t(t-s)^\a \xi_sdW_s =\a\int_0^t (t-s)^{\a-1} \left ( \int_0^s \xi_rdW_r \right ) ds $$ and consequently
\begin{align*}
 \frac{\int_0^t (t-s)^\alpha \xi_s dW_s}{\int_0^t (t-s)^{2\alpha} \xi_s^2 ds} & =  \frac{\int_0^t (t-s)^{\a-1}\left ( \int_0^s \xi_r^2dr\right )  \frac{\int_0^s \xi_rdW_r}{\int_0^s \xi_r^2dr}   ds }{\int_0^t (t-s)^{\a-1}\left ( \int_0^s \xi_r^2dr\right )  ds}.
\end{align*}
Since it is assumed that $\int_0^{\infty} \xi_s^2ds = +\infty$ almost-surely,
$$ \frac{\int_0^s \xi_r dW_r}{\int_0^s \xi_r^2 dr} \xrightarrow[t\to\infty]{a.s.}0 $$
and the generalized Toeplitz lemma \ref{toeplitz} implies \eqref{conv0}.
\end{proof}

%%%%%%%%%%%%%%%%%%%%%%%%%%%%%%%%%%%%%%%%%%%%%%%%%%%%%%%%%
%
%

\end{document}